\documentclass[11pt]{amsart}

\usepackage{amsmath}
\usepackage{amssymb}
\usepackage{graphicx}
\usepackage{geometry}
\usepackage{enumerate}  % 添加这个宏包
\newtheorem{theorem}{Theorem}[section]

\newtheorem{lemma}[theorem]{Lemma}
\newtheorem{proposition}[theorem]{Proposition}
\theoremstyle{definition}
\newtheorem{definition}[theorem]{Definition}

\newtheorem{remark}[theorem]{Remark}
\numberwithin{equation}{section}
\title[A random demiclosedness principle]{A random demiclosedness principle for random asymptotically nonexpansive mappings}
\author[Y. Y. Sun]{Yuanyuan Sun}

\address[Y. Y. Sun]{School of Mathematics and Statistics, Central South University,
	Changsha, China}
\email{{yuanyuansun1205@163.com}}

\author[T. X. Guo]{Tiexin Guo*}
\address[T. X. Guo]{School of Mathematics and Statistics, Central South University,
	Changsha {\rm 410083}, China}
\email{tiexinguo@csu.edu.cn}
\thanks{*Corresponding author}

\author[Q. Tu]{Qiang Tu}
\address[Q. Tu]{School of Mathematics and Statistics, Central South University,
	Changsha, China}
\email{qiangtu126@126.com}

\keywords{Random uniformly convex random normed module, Random asymptotically nonexpansive mapping, Random conjugate space, Random demiclosedness principle}

\subjclass[2010]{46B20, 46H25, 47H09, 47H10, 47H40}

\begin{document}
\maketitle

% 献词部分 - 更精美的格式

\begin{center}
	\normalsize{ \textit{Dedicated to Professor Hong-Kun Xu's 65th birthday.}}
\end{center}

\begin{abstract}
By making full use of the inherent connection  between 
the theory of random conjugate spaces and the theory of 
classical conjugate spaces, in
this paper we establish a random demiclosedness 
principle for a random asymptotically nonexpansive 
mapping, which generalizes Xu's classical  
demiclosedness principle from a uniformly convex Banach 
space to a complete random uniformly convex random 
normed module: let $(E,\|\cdot\|)$ be a complete random 
uniformly convex random normed module, $E^{*}$ the 
random conjugate space of $E$, $G$ an almost surely 
bounded closed $L^{0}$-convex subset of $E$ and $f: G \rightarrow G$  a random asymptotically nonexpansive 
mapping, then $(I-f)$ is random demiclosed at $\theta$, 
namely, for each sequence $\{x_{n}, n\in \mathbb{N}\}$ 
in $G$, if $\{x_{n}, n\in \mathbb{N}\}$  converges in 
$\sigma(E, E^{*})$ to $x$ and $\{(I-f)x_{n}, n\in \mathbb{N}\}$  converges to $\theta$, then $(I-f)x=\theta$, where $I$ denotes the identity 
operator on $E$ and $\sigma(E, E^{*})$ the random weak 
topology on $E$.
\end{abstract}

\maketitle
%==================脚注位置添加基金支持-开始======================
\makeatletter
\newcommand\blfootnote[1]{%
	\begingroup
	\renewcommand\thefootnote{}\footnote{#1}%
	\addtocounter{footnote}{-1}%
	\endgroup
}
\makeatother

\blfootnote{This work was supported by the National Natural Science Foundation of China (Grant Nos.12371141, 12426645, 12426654) and the Natural Science Foundation of Hunan Province of China (Grant No.2023JJ30642).}

%==================脚注位置添加基金支持-结束======================
%%%%%%%%%%%%%%%%%%%%%%%%%%%%%%%%%%%%%%%%%%%%%%%%%%%%%%%%%%%%%%%%%%%%%%%%%%%%%%%%

\section{Introduction}
\label{intro}
%% Labels are used to cross-reference an item using \ref command.
\par
The remarkable  Browder's demiclosedness principle \cite{B76} states that, for a nonempty bounded closed convex subset $G$ of a uniformly convex Banach space $(B, \|\cdot\|)$ and a nonexpansive mapping $f: G \rightarrow B$, $(I-f)$ is demiclosed at each $y\in B$, that is, for any sequence $\{x_{n}, n\in \mathbb{N}\}$ in $G$, if $x_{n}\rightarrow x$ weakly and $(I-f)(x_{n})\rightarrow y$ strongly, then $(I-f)(x)=y$. The principle is also valid in a Banach space satisfying Opial's condition. Since then, the work in \cite{B76} has attracted the attention of many scholars in the field of nonlinear analysis \cite{H80, H82}, for example, it was  generalized from uniformly convex Banach spaces to Banach spaces with the Browder-G\"{o}hde property by Lin \cite{L87}.

%-----------------------------------------------------------------------------------------------------------------------------------------

\par
A lot of scholars have considered the demiclosedness principle for asymptotically nonexpansive mappings. As is well known, one of the celebrated results in the theory of asymptotically nonexpansive mappings is Xu's demiclosedness principle \cite{X91}: let  $(B, \|\cdot\|)$ be a uniformly convex Banach space,  $C$  a nonempty bounded closed convex subset of $B$, $f: C \rightarrow C$  an asymptotically nonexpansive mapping, then $(I-f)$ is demiclosed at zero.  It is well known that the demiclosedness principle  plays a key role in studying the asymptotic behavior and ergodic behavior of asymptotically nonexpansive mappings \cite{KSV14,LTX95,TX92}. For example, based on the demiclosedness principle for asymptotically nonexpansive mappings, Kirk \cite{KS01} established the following result: Let $K$ be a bounded subset of a uniformly convex Banach space $(B, \|\cdot\|)$ and  $f:K \rightarrow K$  a nonexpansive mapping. If $(I-f)$ is demiclosed, then for any $x\in K$, any weak subsequential limit point of $\{g^{n}(x), n\in \mathbb{N}\}$ is a fixed point of $f$, where $g=\frac{I+f}{2}$.  Reich \cite{R80} showed  that: Let  $(B, \|\cdot\|)$ be a uniformly convex Banach space with a Fr\'{e}chet differentiable norm, $K$ a closed convex subset of $B$, $f: K\rightarrow K$ a nonexpansive mapping with a fixed point, and $\{c_{n}, n\in \mathbb{N}\}$ a sequence in the real number interval $[0,1]$ such that $\sum^{\infty}_{n=1}c_{n}(1-c_{n})=+\infty$. If $x_{1}\in K$ and $x_{n+1}=c_{n}f(x_{n})+(1-c_{n})x_{n}$ for any $n\in \mathbb{N}$, then $\{x_{n}, n\in\mathbb{N}\}$ converges weakly to a fixed point of $f$.

%-----------------------------------------------------------------------------------------------------------------------------------------
\par
The purpose of this paper is to extend Xu's demiclosedness principle from asymptotically nonexpansive mappings in a uniformly convex Banach space to random asymptotically nonexpansive mappings in a complete random uniformly convex random normed module.

%-----------------------------------------------------------------------------------------------------------------------------------------
\par
To introduce the motivation and background for the work 
of this paper, let us first recall the historical and in 
particular the recent advances in random functional 
analysis.  Based on the idea of randomizing the traditional space theory of functional analysis, random functional analysis is developed as a whole random 
generalization of classical functional analysis \cite{Guo92, Guo93}, whose  basic frameworks are random normed modules (briefly, $RN$ modules) and random locally convex modules (briefly, $RLC$ modules). When the basic frameworks are endowed with the $(\varepsilon, 
\lambda)$-topology, they are not locally convex in 
general, so the traditional theory of  conjugate spaces is no longer universally applicable for the further development of random functional analysis, which leads to a systematic development of the theory of random conjugate spaces \cite{Guo2000, Guo08, Guo10a, GL05, GXX09}. In particular, the inherent connection between the random conjugate space $E^{*}$ of an $RN$ module $E$ and the  classical conjugate spaces $(L^{p}(E))^{'}$ of the abstract $L^{p}$-space $L^{p}(E)$ generated by $E$, namely, $(L^{p}(E))^{'}\cong L^{q}(E^{*})$, has played a crucial role in the development of the theory of random conjugate spaces, see \cite{Guo2000} or Proposition \ref{proposition2.6} of this paper for details. Random functional analysis is of fundamental importance in the development of nonsmooth differential geometry on metric measure space: since the notion of an $RN$ module is equivalent to that of an $L^{0}$-normed $L^{0}$-module, which was independently introduced by Gigli \cite{Gigli18} as a tool for the study of nonsmooth differential geometry, and it is also proved in \cite{GMT24} that every $L^{p}$-normed $L^{\infty}$-module introduced by Gigli \cite{Gigli18} is of the form $L^{p}(E)$ for a complete $RN$ module $E$ and the module dual of $L^{p}(E)$ is exactly $L^{q}(E^{*})$; see \cite{BPS2023, CGP2025, CLPV2025, LP, LPV} for the recent advances in nonsmooth differential geometry. In the last ten years, developing 
fixed  point theory of random functional analysis has 
been one of the central tasks with an aim of providing a 
tool for dynamic mathematical finance and stochastic 
differential equations \cite{Guo10a, Guo11, TMGC25,TMGY25}. Establishing the topological fixed theory of random functional analysis has motivated the development of the theory of stable compactness \cite{GWXYC25, TMG24, TMGC25,TMGY25}, while establishing its metric fixed point theory has stimulated the deep development of geometry of $RN$ modules \cite{GZWG20,MTG,SGT25}.

%-----------------------------------------------------------------------------------------------------------------------------------------

\par
Based on the recent advances in the fixed point theory of random functional analysis, in this paper we continue to study the random demiclosedness principle for a random asymptotically nonexpansive mapping. Let us again recall Xu's classical work \cite[Theorem 2]{X91}. Xu's demiclosedness principle \cite[Theorem 2]{X91} mainly depends on  \cite[Theorem 2]{B81}, which utilizes the theory of geometry of Banach spaces such as $B$-convexity  or convex approximation property. Although geometry of $RN$ modules early began with Guo and Zeng's work \cite{GZ10}, where it was proved that a complete $RN$ module $(E,\|\cdot\|)$ is random uniformly convex iff $L^{p}(E)$ is uniformly convex for any given $p\in(1,+\infty)$ (see also Proposition \ref{lemma2.3.3} of this paper for details), the random versions of the notions of the classical $B$-convexity and convex approximation property have been never presented and studied for an $RN$ module. Therefore, if one hopes to established the random demiclosedness principle in an $RN$ module, it is impossible to directly move Xu's approach \cite[Theorem 2]{X91} to the random setting. However, motivated by the works in \cite{GWXYC25, SGT25}, we can solve this problem by decomposing a random asymptotically nonexpansive mapping $f$ defined on an  almost surely  bounded closed $L^{0}$-convex subset $G$ of a complete random uniformly convex $RN$ module $(E,\|\cdot\|)$ into a  sequence  of smaller  mappings $\{f_{i},i\in \mathbb{N}\}$ such that each $f_{i}$ is an asymptotically nonexpansive mapping defined on  a bounded closed convex subset $G_{i}$ of the uniformly convex Banach space $L^{p}(E)$. Our success relies on two key connections: one is the connection between the random uniform convexity of $E$ and the classical uniform convexity, the other is the connection between the random conjugate space $E^{*}$ of $E$ and the classical conjugate space $(L^{p}(E))^{'}$ of $L^{p}(E)$. With this decomposition at hands, we can first apply Xu's classical demiclosedness principle \cite[Theorem 2]{X91} to each $f_{i}$ and then obtain the random demiclosedness of $f$ by combining  the $\sigma$-stable properties of $f$ and $G$.

\par
The remainder of this paper is organized as follows. In Section \ref{section2}, we provide some prerequisites and further present the main result --- Theorem \ref{theorem1.7}, namely, random demiclosedness principle for a random asymptotically nonexpansive mapping. Finally, Section \ref{section3} is devoted to the proof of Theorem \ref{theorem1.7}.

%% Use \subsection commands to start a subsection.

%####################################################################
%####################################################################
\section{Preliminaries and main result}\label{section2}
%####################################################################
%####################################################################

\par
Throughout this paper, $(\Omega,\mathcal{F},P)$ denotes a given probability space, $\mathbb{N}$ the set of positive integers, $\mathbb{K}$ the scalar field $\mathbb{R}$ of real numbers or $\mathbb{C}$ of complex numbers, $L^{0}(\mathcal{F},\mathbb{K})$ the algebra of equivalence classes of $\mathbb{K}$-valued $\mathcal{F}$-measurable random variables  on $(\Omega,\mathcal{F},P)$, $\bar{L}^{0}(\mathcal{F})$ the set of equivalence classes of  extended real valued $\mathcal{F}$-measurable random  variables  on $(\Omega,\mathcal{F},P)$ and $L^{0}(\mathcal{F}):=L^{0}(\mathcal{F},\mathbb{R})$.

\par
The partial order $\leq$ on $\bar{L}^{0}(\mathcal{F})$  is defined by  $\xi\leq \eta$ iff $\xi^{0}(\omega)\leq \eta^{0}(\omega)$ for almost surely all $\omega\in \Omega$ (briefly, $\xi^{0}(\omega)\leq\eta^{0}(\omega)$ $\mathbf{a.s.}$), where $\xi^{0}$ and $\eta^{0}$ are arbitrarily chosen representatives of $\xi$ and $\eta$,  respectively. Then, $(\bar{L}^{0}(\mathcal{F}),\leq)$ is a complete lattice \cite{D57}. Specially, $(L^0(\mathcal{F}), \leq)$, as a sublattice of  $(\bar{L}^{0}(\mathcal{F}),\leq)$, is Dedekin complete.

%---------------------------------------------------------------------------------------------------------------

\par
As usual, for any $\xi$ and $\eta$ in $\bar{L}^{0}(\mathcal{F})$, $\xi<\eta$ means $\xi\leq\eta$ and $\xi\neq\eta$ , whereas, for
any $A\in \mathcal{F}$, $\xi<\eta$ on $A$ means  $\xi^{0}(\omega)<\eta^{0}(\omega)$ for almost all $\omega\in A$, where $\xi^{0}$ and $\eta^{0}$ are respectively arbitrarily chosen representatives of $\xi$ and $\eta$.
\par
In this paper, the following notations are always employed:
\par $L^{0}_{+}(\mathcal{F})=\{\xi \in L^{0}(\mathcal{F})~|~\xi \geq 0\}$;
\par $L^{0}_{++}(\mathcal{F})=\{\xi \in L^{0}(\mathcal{F})~|~\xi>0~\text{on}~\Omega\}$.
\par
For any $A\in \mathcal{F}$, $\tilde{I}_{A}$ stands for the equivalence class of  $I_{A}$, where $I_{A}$ is the characteristic function of $A$, namely, $I_{A}(\omega)=1$ if $\omega\in A$ and $0$ otherwise.
\par

%--------------------------------------------------

    The notion of an $RN$ module was introduced independently by Guo in \cite{Guo92,Guo93} and by Gigli in \cite{Gigli18}. An ordered pair $(E,\| \cdot \|)$ is called an $RN$ module over  $\mathbb{K}$ with base $(\Omega,\mathcal{F},P)$ if $E$ is a left module over the algebra $L^0(\mathcal{F},\mathbb{K})$ (briefly, an $L^0(\mathcal{F},\mathbb{K})$-module$)$ and $\| \cdot \|$ is a mapping from $E$ to $L^0_+(\mathcal{F})$ such that the following  are satisfied:
	\begin{enumerate}[(RNM-1)]
		\item $\|x\| = 0$ implies $x = \theta$ (the null in $E$);
		\item $\| \xi\cdot x \| = |\xi| \cdot\|x\|$ for any $(\xi, x)\in L^{0}(\mathcal{F},\mathbb{K})\times E$;
		\item $\|x+y\| \leq \|x\| + \|y\|$ for all $x$ and $y \in E$.
	\end{enumerate}
	As usual, $\| \cdot \|$ is called the $L^0$-norm on $E$.

\par
When $(\Omega, \mathcal{F}, P)$ is trivial, an $RN$ module reduces to a normed space. The simplest nontrivial $RN$ is $(L^0(\mathcal{F},\mathbb{K}),|\cdot|)$, where $|\cdot|$ is the usual absolute value mapping on $L^0(\mathcal{F},\mathbb{K})$.

%---------------------------------------------------------------------------------------------------------------

\par
In this paper, an $RN$ module is always endowed with the $(\varepsilon, \lambda)$-topology as follows. Let $(E, \|\cdot\|)$ be an $RN$ module over $\mathbb{K}$ with base $(\Omega, \mathcal{F}, P)$. For any real numbers $\varepsilon > 0$ and $0 < \lambda < 1$, let $N_{\theta}(\varepsilon, \lambda) = \{x \in E~|~P\{\omega \in \Omega|~ \|x\|(\omega) < \varepsilon\} > 1 - \lambda\}$. Then $\mathcal{U}_{\theta} = \{N_{\theta}(\varepsilon, \lambda)~|~\varepsilon > 0, 0 < \lambda < 1\}$ forms a local base for some metrizable linear topology on $E$, called the $(\varepsilon, \lambda)$-topology, denoted by $\mathcal{T}_{\varepsilon,\lambda}$. According to \cite{Guo92,Guo10a}, the following statements hold:
	\begin{enumerate}[{\rm(1)}]
		\item $(L^{0}(\mathcal{F}, \mathbb{K}), \mathcal{T}_{\varepsilon,\lambda})$ is a topological algebra over $\mathbb{K}${\rm ;}
		\item $(E,\mathcal{T}_{\varepsilon,\lambda})$ is a topological module over the topological algebra $(L^{0}(\mathcal{F}, \mathbb{K}),\\ \mathcal{T}_{\varepsilon,\lambda})${\rm ;}
		\item A sequence $\{x_{n}, n \in \mathbb{N}\}$ in $E$ converges in the $\mathcal{T}_{\varepsilon,\lambda}$ to $x\in E$ iff $\{\|x_{n}-x\|, n \in \mathbb{N}\}$  converges in probability to $\theta${\rm .}
	\end{enumerate}

It is easy to see that the $\mathcal{T}_{\varepsilon,\lambda}$ for $(L^0(\mathcal{F},\mathbb{K}),|\cdot|)$ is exactly the topology of convergence in probability.

The idea of random conjugate spaces was introduced in \cite{Guo92} as follows. Let $(E,\|\cdot\|)$ be an $RN$ module over $\mathbb{K}$ with base $(\Omega, \mathcal{F}, P)$, a linear operator $f$ from $E$ to $L^{0}(\mathcal{F}, K)$ is called an almost surely (briefly, $\textbf{a.s.}$) bounded random linear functional if there exists some $\xi$ in $L^{0}_{+}(\mathcal{F})$ such that $|f(x)|\leq \xi\cdot\|x\|$ for any $x\in E$. It is well known from \cite{Guo92} that a linear operator $f: E\rightarrow L^{0}(\mathcal{F},\mathbb{K})$ is $\mathbf{a.s.}$ bounded if and only if $f$ is a continuous module homomorphism. Denote by $E^{*}$ the $L^{0}(\mathcal{F},\mathbb{K})$-module of continuous module homomorphisms from $E$ to $L^{0}(\mathcal{F}, \mathbb{K})$ and $\|\cdot\|^{*}: E^{*}\rightarrow L^{0}_{+}(\mathcal{F})$ by $\|f\|^{*}=\bigwedge\{\xi\in L^{0}_{+}(\mathcal{F})~|~|f(x)|\leq\xi\cdot \|x\|, \forall x\in E\}$ for any $f\in E^{*}$, then $(E^{*}, \|\cdot\|^{*})$ is again an $RN$ module over $\mathbb{K}$ with base $(\Omega, \mathcal{F}, P)$, called the random conjugate space of $(E,\|\cdot\|)$.

\par
For an $RN$ module $(E,\|\cdot\|)$, since $E$ is not necessarily locally convex, it makes no sense to speak of  its weak topology. With the help of the theory of random conjugate spaces, Guo \cite{Guo08} introduced the notion of random weak topology of $E$ as follows. For any real numbers $\varepsilon > 0$ and $0 < \lambda < 1$ and any $f\in E^{*}$, let $N_{\theta}(f, \varepsilon, \lambda)=\{x\in E~|~ P\{\omega\in\Omega~|~|f(x)|(\omega)<\varepsilon\}>1-\lambda\}$. Then $\mathcal{U}_{\theta}=\{\bigcap^{n}_{i=1}N_{\theta}(f_{i},\varepsilon_{i}, \lambda_{i})~|~\varepsilon_{i}>0, 0<\lambda_{i}<1, f_{i}\in E^{*},1\leq i\leq n$ and $n$ is any positive integer$\}$ forms a local base at $\theta$ of some Hausdorff linear topology on $E$, called the random weak topology of $E$, denoted by $\sigma(E,E^{*})$. Clearly, a net $\{x_{\alpha},\alpha\in\Gamma\}$ in $E$ converges in $\sigma(E,E^{*})$ to some element $x$ in $E$ iff the net $\{f(x_{\alpha}),\alpha\in\Gamma\}$ in $L^{0}(\mathcal{F},\mathbb{K})$ converges in probability $P$ to $f(x)$ for each $f\in E^{*}$.

\par
 The following basic connection between $RN$ modules and normed spaces will be used in this paper. Let $(E, \|\cdot\|)$ be an $RN$ module over $\mathbb{K}$ with base $(\Omega, \mathcal{F}, P)$ and $1\leq p\leq +\infty$. Let $L^{p}(E)=\{x\in E|~\|x\|_{p}<+\infty\}$, where $\|\cdot\|_{p}: E \rightarrow [0,+\infty]$ is defined by:
% $\|x\|_{p} =
% \begin{cases}
% 	\item \int_{\Omega}(\|x\|^{p})dP)^{\frac{1}{p}}, ~&\text{when}~1\leq p<+\infty;\\
% 	\item \inf\{M\in [0,+\infty], ~&\text{when}~p=+\infty
% \end{cases} $

	\[
\|x\|_{p} =
\begin{cases}
(\int_{\Omega}\|x\|^{p}dP)^{\frac{1}{p}}, & \text{when } 1\leq p<+\infty;\\
\inf\{M\in [0,+\infty]~|~\|x\|\leq M\}, & \text{when}~p=+\infty
\end{cases}
\]
	for all $x\in E$. Then, it is easy to see that $(L^{p}(E), \|\cdot\|_{p})$ is a normed space.  Moreover, if $(E,\|\cdot\|)$ is a complete $RN$ module, then $(L^{p}(E),\|\cdot\|_{p})$ is  a Banach space.

\par
 \par
 Proposition \ref{proposition2.6} below is an important representation theorem, which establishes the inherent connection between the random conjugate space $E^{*}$ of an $RN$ module $E$ and the classical conjugate space $(L^{p}(E))^{'}$ of $L^{p}(E)$.

%------------------------------------------------------------------------------------------------------------------

\begin{proposition}[\cite{Guo2000}]\label{proposition2.6}
$L^{q}(E^{*})\cong (L^{p}(E))^{'}$ under the canonical mapping $T$,  where $p$ and $q$ are a pair of H\"{o}lder conjugate numbers with $1\leq p<+\infty$, $(L^{p}(E))^{'}$ denotes the classical conjugate space of $L^{p}(E)$ and for each $F\in L^{q}(E^{*})$, $T_{F}$ (denoting $T(F))~:~L^{p}(E)\rightarrow \mathbb{K}$ is defined by $T_{F}(x)=\int_{\Omega}F(x)dP$,  for all $x\in L^{p}(E)$.
\end{proposition}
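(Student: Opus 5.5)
The plan is to prove the three parts of the statement in order: that $T$ is well defined and contractive, that it is isometric, and that it is surjective. Throughout I will use the countable concatenation property of $RN$ modules. For a countable partition $\{A_k\}$ of $\Omega$ into sets in $\mathcal{F}$ and elements $x_k$, the sum $\sum_k\tilde I_{A_k}x_k$ makes sense, in the completion if necessary; I will arrange that it already lies in $L^p(E)$.

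\emph{Well-definedness and contractivity.} Fix $F\in L^q(E^*)$ and $x\in L^p(E)$. From the definition of $\|\cdot\|^*$ we have $|F(x)|\le\|F\|^*\cdot\|x\|$ a.s. Hölder's inequality then gives $F(x)\in L^1$ and
\[
|T_F(x)|\le\|F\|^*_q\,\|x\|_p .
\]
So $T_F\in(L^p(E))'$ with $\|T_F\|\le\|F\|_q$, and $T$ is clearly linear.

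\emph{Reverse inequality (isometry).}
\begin{enumerate}[(1)]
\item First I would show $\|F\|^*=\bigvee\{|F(x)|: \|x\|\le 1\}$.
\item The set $\{|F(x)|:\|x\|\le1\}$ is upward directed: given $x,y$, put $A=\{|F(x)|\ge|F(y)|\}$ and use $\tilde I_Ax+\tilde I_{A^c}y$. Hence there is a sequence $x_n$ with $\|x_n\|\le1$ and $|F(x_n)|\uparrow\|F\|^*$ a.s.
\item Multiplying by a unimodular $L^0$ factor, I may assume $F(x_n)=|F(x_n)|$.
\item For $1<p<\infty$, test $T_F$ on $y_n=\tilde I_{B_m}\,(\|F\|^*)^{q-1}x_n$, where $B_m=\{\|F\|^*\le m\}$ makes $y_n$ lie in $L^p(E)$. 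Monotone convergence in $n$ and then in $m$ yields $\|T_F\|\ge\|F\|_q$.
\item For $p=1$, $q=\infty$, test instead on $\tilde I_{A}x_n/P(A)$ with $A=\{\|F\|^*>\|F\|_\infty-\varepsilon\}$.
\end{enumerate}

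\emph{Surjectivity.} Fix $\varphi\in(L^p(E))'$.
\begin{enumerate}[(1)]
\item For $x\in L^p(E)$, the set function $\mu_x(A)=\varphi(\tilde I_Ax)$ is finitely additive. Because $p<\infty$, dominated convergence gives $\|\tilde I_Ax\|_p\to0$ as $P(A)\to0$. Hence $\mu_x$ is countably additive and $\mu_x\ll P$.
\item Radon--Nikodym gives a density $F(x)\in L^1$ with $\varphi(\tilde I_Ax)=\int_AF(x)\,dP$.
\item The map $F$ is additive. By uniqueness of densities, $F(\tilde I_Ax)=\tilde I_AF(x)$; this extends to simple $\xi$ and then, by the $L^p$-continuity of $\varphi$ together with truncation, to $F(\xi x)=\xi F(x)$ whenever $\xi x\in L^p(E)$.
\item Set $G=\bigvee\{|F(x)|:x\in L^p(E),\ \|x\|\le1\}$. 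By the same directedness argument, $G=\lim|F(x_n)|$ for a suitable sequence.
\item The main obstacle is showing $G\in L^q$. I would test $\varphi$ on $\sum_k\tilde I_{C_k}h\,\bar u_n x_n$, where $h\ge0$ is bounded, $u_n$ is the unimodular phase of $F(x_n)$, and $\{C_k\}$ is a partition making everything integrable. This gives $\int hG\,dP\le\|\varphi\|\,\|h\|_p$ for all such $h$, and the duality $L^p(P)'=L^q(P)$ then yields $\|G\|_q\le\|\varphi\|$.
\item With $G\in L^q$ in hand, we get $|F(x)|\le G\|x\|$ on $L^p(E)$.
\item Every $x\in E$ is a countable concatenation $\sum_k\tilde I_{A_k}x$ with $A_k=\{k-1\le\|x\|<k\}$, and each piece lies in $L^p(E)$. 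Define $F(x)=\sum_k\tilde I_{A_k}F(\tilde I_{A_k}x)$. This extends $F$ to an a.s. bounded random linear functional on $E$, with $\|F\|^*\le G\in L^q$.
\item Finally, $T_F=\varphi$ on $L^p(E)$ by construction, since $T_F(x)=\int_\Omega F(x)\,dP=\mu_x(\Omega)=\varphi(x)$.
\end{enumerate}
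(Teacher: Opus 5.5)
The paper does not prove this proposition. It is quoted from \cite{Guo2000}, and only its surjectivity half is used later: in Lemma \ref{lemma2.6}, to write each $H\in(L^{p}(E))'$ as $T_F$ with $\|F\|^{*}\in L^{q}$. So there is no internal argument to compare with. Judged on its own, your proof is correct and self-contained.

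\begin{itemize}
\item Hölder's inequality gives $\|T_F\|\le\|F\|_q$.
\item The representation $\|F\|^{*}=\bigvee\{|F(x)|:\|x\|\le1\}$, together with directedness and a phase correction, gives the reverse inequality.
\item The Radon--Nikodym construction $\varphi(\tilde I_Ax)=\int_AF(x)\,dP$ gives surjectivity, and $p<\infty$ enters exactly where it must, in the countable additivity of $\mu_x$.
\item You never use completeness of $E$ or full support, which matches the generality of the statement as quoted.
\end{itemize}

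A few places should be tightened, though none is a real gap.

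\begin{itemize}
\item In step (5) of the surjectivity part, the partition $\{C_k\}$ does nothing. Since $h$ is bounded and $\|\bar u_nx_n\|\le1$, the element $h\bar u_nx_n$ already lies in $L^{\infty}(E)\subset L^{p}(E)$.
\item $G$ is a priori only an element of $\bar{L}^{0}(\mathcal{F})$ and may be $+\infty$ on a set of positive measure. So rather than appealing directly to $L^{p}(P)'=L^{q}(P)$, test with explicit choices of $h$.
  \begin{itemize}
  \item For $p>1$, take $h=\tilde I_{\{G\le M\}}G^{q-1}$. This gives $(\int_{\{G\le M\}}G^{q}\,dP)^{1/q}\le\|\varphi\|$; then let $M\to\infty$.
  \item For $p=1$, take $h=\tilde I_{\{G>\|\varphi\|+\varepsilon\}}$. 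This forces that set to be null.
  \end{itemize}
\item Linearity of the extension in step (7) should be checked explicitly. Use the common refinement of the level-set partitions of $\|x\|$, $\|y\|$ and $\|x+y\|$. On each cell, linearity follows from the locality property $F(\tilde I_Bz)=\tilde I_BF(z)$ of step (3).
\item The unproved claim (1) of the isometry part follows by evaluating $F$ at $\tilde I_{A}\|x\|^{-1}x$, with $A=\{\|x\|>0\}$ and $\|x\|^{-1}$ read as $0$ off $A$. This is the same normalization you use in step (6) of the surjectivity part.
\end{itemize}
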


%------------------------------------------------------------------------------------------------------------------

%----------------------------------------------------------------------------------------------------------------------

\par
Let $(E, \|\cdot\|)$ be an $RN$ module over $\mathbb{K}$ 
with base $(\Omega, \mathcal{F}, P)$. A nonempty subset 
$G$ of $E$ is said to be $L^{0}$-convex,  if $\xi x+ 
\eta y\in G$ for any $x$, $y\in G$ and any $\xi$, 
$\eta\in L^{0}_{+}(\mathcal{F})$ with $\xi+\eta=1$;  
$\mathbf{a.s.}$ bounded or $L^{0}$-bounded, if 
$\bigvee\{\|x\|~|~x\in G\}\in L^{0}_{+}(\mathcal{F})$. The 
set $supp(E)=\{\omega\in 
E~|~\xi^{0}(\omega)=+\infty\}$ 
is called the support of $E$ ($supp(E)$ is unique 
$\mathbf{a.s}$.), where $\xi =\bigvee\{\|x\|~|~x\in E\}$ 
and $\xi^{0}$ is an arbitrarily chosen
representative of $\xi$. If $P(supp(E))=1$, then $E$ is said to have full support. In the remainder of this paper, it is always assumed that all $RN$ modules mentioned have full support.
\par
Further, we employ the following notations for a brief 
introduction to random uniformly convex $RN$ modules:
$$\small{\varepsilon_{\mathcal{F}}[0,2]=\{\varepsilon\in 
L^{0}_{++}(\mathcal{F})~|~\mbox{there exists a positive 
number}~\lambda~\mbox{such that}~\lambda \leq
\varepsilon \leq 2\}.}$$
$$\small{\delta_{\mathcal{F}}[0,1]=\{\delta\in 
L^{0}_{++}(\mathcal{F})~|~\mbox{there exists a positive 
number}~ \eta~\mbox{such that}~\eta \leq \delta \leq
1\}.}$$
\par
For any $x,~y$ in $E$, denote the equivalence class of $\mathcal{F}$-measurable set $\{\omega\in \Omega~|~\|x\|^{0}(\omega)\neq 0\}$ by $A_{x}$, called the support of $x$, where $\|x\|^{0}$ is an arbitrarily chosen representative of $\|x\|$; and we simply write $A_{x,y}=A_{x}\cap A_{y}$ and $B_{x,y}=A_{x}\cap A_{y}\cap A_{x-y}$.

%---------------------------------------------------------------------------------------------------------------

\begin{definition}[\cite{GZ10}]\label{definition1.4}
	Let $(E, \|\cdot\|)$ be a  complete $RN$ module over $\mathbb{K}$ with base $(\Omega, \mathcal{F},P)$. $E$ is said to be random uniformly convex if for each $\varepsilon\in \varepsilon_{\mathcal{F}}[0,2]$ there exists  $\delta\in \delta_{\mathcal{F}}[0,1]$ such that $\|x-y\|\geq \varepsilon $ on $D$ always implies $\|x+y\|\leq 2(1-\delta)$ on $D$ for any $x$, $y$ $\in$ $U(1)$ and any $D\in \mathcal{F}$ such that $D\subset B_{x, y}$ and $P(D)>0$, where $U(1)=\{z\in E~|~\|z\|\leq 1\}$, called the random closed unit ball of $E$.
\end{definition}

%-------------------------------------------------------

Geometry of $RN$ modules began with the works of Guo and Zeng \cite{GZ10, GZ12}, where  they also established the equivalence between random uniform convexity of an $RN$ module $E$ and ordinary uniform convexity of the abstract normed space $L^{p}(E)$.

\begin{proposition}[\cite{GZ10}]\label{lemma2.3.3}
Let $(E, \|\cdot\|)$ be a complete $RN$ module over $\mathbb{K}$ with base $(\Omega, \mathcal{F}, P)$ and $p$ be a given positive number with $1<p<+\infty$. Then $(L^{p}(E), \|\cdot\|_{p})$ is uniformly convex iff $E$ is random uniformly convex.	
\end{proposition}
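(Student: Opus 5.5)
We prove the two implications separately. The direction "$L^{p}(E)$ uniformly convex $\Rightarrow$ $E$ random uniformly convex" is a localization argument. The converse reduces, pointwise on $\Omega$, to a scalar inequality which we then integrate, in the spirit of the classical proof that Bochner spaces $L^{p}(X)$ over a uniformly convex $X$ are uniformly convex.

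\emph{($\Rightarrow$).} Let $\varepsilon\in\varepsilon_{\mathcal{F}}[0,2]$ with $\varepsilon\geq\lambda>0$. We claim that a \emph{constant} $\delta\in(0,1]$ already works.
\begin{enumerate}[(1)]
\item Suppose not. Then for each $n$ there are $x_{n},y_{n}\in U(1)$ and $D_{n}\subset B_{x_{n},y_{n}}$ with $P(D_{n})>0$ such that $\|x_{n}-y_{n}\|\geq\varepsilon$ on $D_{n}$. Moreover, the set $A_{n}\subset D_{n}$ on which $\|x_{n}+y_{n}\|>2(1-1/n)$ has positive probability.
\item Put $u_{n}=P(A_{n})^{-1/p}\tilde{I}_{A_{n}}x_{n}$ and $v_{n}=P(A_{n})^{-1/p}\tilde{I}_{A_{n}}y_{n}$.
\item Then $\|u_{n}\|_{p},\|v_{n}\|_{p}\leq1$ and $\|u_{n}-v_{n}\|_{p}\geq\lambda$.
\item Also $\|u_{n}+v_{n}\|_{p}\geq 2(1-1/n)$, which contradicts the uniform convexity of $L^{p}(E)$ for the fixed distance $\lambda$.
\end{enumerate}

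\emph{($\Leftarrow$), pointwise step.} First we extract a homogeneous pointwise modulus. Apply the definition with the constant $\varepsilon_{0}\in(0,2]$ to $x/M$ and $y/M$, restricted to $D\cap\{M>0\}$, where $M=\|x\|\vee\|y\|$. This yields a constant $\delta_{0}=\delta(\varepsilon_{0})>0$ (constant after shrinking to the lower bound $\eta$) such that for all $x,y\in E$:
\[
\|x-y\|\geq\varepsilon_{0}M \text{ on } D \ \Longrightarrow\ \Bigl\|\tfrac{x+y}{2}\Bigr\|\leq(1-\delta_{0})M \text{ on } D .
\]
The measurable splitting needed to take $D$ to be the whole set where the hypothesis holds is routine, since all sets involved are defined by $L^{0}$ inequalities.

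Next we prove a scalar gain inequality. Write $a=\|x\|$ and $b=\|y\|$. On the set $S=\{\|x-y\|\geq\varepsilon_{0}M\}$ we aim for
\[
\frac{a^{p}+b^{p}}{2}-\Bigl\|\frac{x+y}{2}\Bigr\|^{p}\geq c\,M^{p}
\]
with $c=c(p,\varepsilon_{0})>0$. We split $S$ into two cases.
\begin{enumerate}[(1)]
\item If $|a-b|\geq\eta_{1}M$, strict convexity of $t\mapsto t^{p}$ together with $\|\tfrac{x+y}{2}\|\leq\tfrac{a+b}{2}$ gives the gain.
\item If $|a-b|<\eta_{1}M$, then $a^{p},b^{p}\geq(1-\eta_{1})^{p}M^{p}$. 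Since $\|\tfrac{x+y}{2}\|^{p}\leq(1-\delta_{0})^{p}M^{p}$, we get a gain once $\eta_{1}$ is chosen small relative to $\delta_{0}$.
\end{enumerate}
Off $S$ we only use convexity: $\|\tfrac{x+y}{2}\|^{p}\leq\tfrac{a^{p}+b^{p}}{2}$.

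\emph{($\Leftarrow$), integration step.} Let $\|x\|_{p},\|y\|_{p}\leq1$ and $\|x-y\|_{p}\geq\varepsilon$.
\begin{enumerate}[(1)]
\item On $\Omega\setminus S$ we have $\int_{\Omega\setminus S}\|x-y\|^{p}dP\leq\varepsilon_{0}^{p}\int M^{p}dP\leq2\varepsilon_{0}^{p}$.
\item Choose $\varepsilon_{0}$ with $2\varepsilon_{0}^{p}\leq\varepsilon^{p}/2$. Then $\int_{S}\|x-y\|^{p}dP\geq\varepsilon^{p}/2$, and since $\|x-y\|\leq 2M$ this gives $\int_{S}M^{p}dP\geq2^{-p-1}\varepsilon^{p}$.
\item Integrating the pointwise inequalities gives
\[
\Bigl\|\tfrac{x+y}{2}\Bigr\|_{p}^{p}\leq1-c\,2^{-p-1}\varepsilon^{p},
\]
which is uniform convexity.
\end{enumerate}

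The main obstacle is the pointwise step: obtaining a deterministic constant $\delta_{0}$ from the random definition, and checking that the scalar case analysis gives a gain proportional to $M^{p}$ uniformly on $S$. The remaining measure-theoretic bookkeeping is routine.
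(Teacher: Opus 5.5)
Your proof is correct. The paper gives no proof of this proposition (it is imported from Guo--Zeng), so there is no in-text argument to compare with; what you supply is a self-contained proof.

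In the direction ``$L^{p}(E)$ uniformly convex $\Rightarrow$ $E$ random uniformly convex'', you normalize $\tilde{I}_{A_n}x_n,\tilde{I}_{A_n}y_n$ by $P(A_n)^{-1/p}$. This converts a failure on a positive-probability set into a failure of the modulus of $L^{p}(E)$ at the fixed distance $\lambda$. Run directly, the same computation shows that the constant $\delta=\delta_{L^{p}(E)}(\lambda)$ itself works, so the sequence-and-contradiction packaging is not needed.

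In the converse, the only genuinely random input is the observation you isolate. For a \emph{constant} $\varepsilon_0$, the definition returns some $\delta\in\delta_{\mathcal{F}}[0,1]$, which has a deterministic lower bound $\eta$. This gives a deterministic, homogeneous pointwise modulus. From there the argument is the classical one for Lebesgue--Bochner spaces over a uniformly convex space: a pointwise gain $cM^{p}$ on the set where $\|x-y\|\geq\varepsilon_0M$, then integrate. Your route needs no random modulus of convexity or other $L^0$-valued machinery. It also yields an explicit modulus for $L^{p}(E)$ in terms of $p$ and $\eta(\varepsilon_0)$.

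One detail to tidy: the definition only applies on sets $D\subset B_{x,y}$, so your displayed implication ``for all $x,y\in E$'' is not literally what it yields. On $S\setminus B_{x,y}$, one of $\|x\|,\|y\|$ vanishes (if instead $\|x-y\|=0$ there, then $M=0$ and everything is trivial). So there $\|\frac{x+y}{2}\|=\frac{M}{2}$, and the implication holds only after replacing $\delta_0$ by $\min\{\delta_0,\frac12\}$. This is harmless, because you invoke the implication only in your second case, $|a-b|<\eta_1M$. That case forces $M>0$, $a,b>(1-\eta_1)M>0$ and $\|x-y\|\geq\varepsilon_0M>0$, so the set already lies inside $B_{x,y}$ and the definition applies as stated.
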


%-------------------------------------------------------

We first studied random asymptotically nonexpansive mappings in \cite{SGT25}. Let ($E,\|\cdot\|$) be an $RN$ module over $\mathbb{K}$ with base $(\Omega,\mathcal{F},P)$ and  $G$ be a nonempty subset of $E$. A mapping $f:G\rightarrow G$  is said to be random asymptotically nonexpansive if there exists a sequence $\{\eta_{m}, m\in \mathbb{N}\}$ in $L^{0}_{+}(\mathcal{F})$ with $\{\eta_{m}, m\in \mathbb{N}\}$ convergent $\textbf{a.s.}$ to 1, such that
	\begin{align*}
		\| f^{m}x-f^{m}y\| \leq \eta_{m}\| x-y\|, \forall x,y \in G~and ~m\in \mathbb{N}.
	\end{align*}

%---------------------------------------------------------------------------------------------------------------------------

The  main result of this paper is Theorem  \ref{theorem1.7} below, which can be regarded as a random generalization of Xu's demiclosedness principle \cite[Theorem 2]{X91}.

%----------------------------------------------------------------------

\begin{theorem}\label{theorem1.7}
Let $(E,\|\cdot\|)$ be a complete random uniformly convex $RN$ module over $\mathbb{K}$ with base $(\Omega,\mathcal{F},P)$, $G$ an $\mathbf{a.s.}$ bounded closed  $L^{0}$-convex subset of $E$ and $f: G \rightarrow G$ a random asymptotically nonexpansive mapping. Then $(I-f)$ is random demiclosed at $\theta$, namely, for each sequence $\{x_{n}, n\in \mathbb{N}\}$ in $G$, if $\{x_{n}, n\in \mathbb{N}\}$ converges in $\sigma(E,E^{*})$ to $x$  and $\{(I-f)x_{n}, n\in\mathbb{N}\}$  converges to $\theta$, then  $(I-f)x=\theta$, where $I$ denotes the identity operator on $E$.
\end{theorem}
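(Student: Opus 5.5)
The plan is to reduce to Xu's classical demiclosedness principle \cite[Theorem 2]{X91} in the uniformly convex Banach space $L^{p}(E)$ (fix, say, $p=2$; it is uniformly convex by Proposition \ref{lemma2.3.3}). Two features obstruct a direct reduction: $G$ is only $\mathbf{a.s.}$ bounded and not bounded in $L^{p}(E)$, and the constants $\eta_{m}$ are random rather than real. Both will be handled by localizing to sets $A_{i}$ on which everything is uniformly controlled. We then glue the local conclusions using the $\sigma$-stability of $G$ and $f$. Stability of $f$ means $f(\tilde I_{A}x+\tilde I_{A^{c}}y)=\tilde I_{A}f(x)+\tilde I_{A^{c}}f(y)$, which follows from the random Lipschitz property since $\|f(\tilde I_{A}x+\tilde I_{A^c}y)-f(x)\|\le \eta_1\|\tilde I_{A^c}(y-x)\|$ vanishes on $A$.

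Step 1 (localization). Let $\xi=\bigvee\{\|z\|: z\in G\}\in L^{0}_{+}(\mathcal F)$. By Egoroff's theorem applied to $\eta_{m}\to1$ $\mathbf{a.s.}$, together with $\xi<\infty$ $\mathbf{a.s.}$, we choose a countable partition $\{A_{i}\}$ of $\Omega$ (up to a null set) with $P(A_{i})>0$. We arrange that on each $A_{i}$ we have $\xi\le M_{i}$, and that $k_{m}^{(i)}:=\|\eta_{m}\tilde I_{A_i}\|_{\infty}<\infty$ for $m\ge m_{i}$ with $k^{(i)}_{m}\to1$. To also control the first finitely many $m$, we further refine using the events $\{\eta_{m}\le K\}$, since each $\eta_{m}$ is finite $\mathbf{a.s.}$. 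Fix $x_0\in G$ and set $G_{i}=\tilde I_{A_i}G+\tilde I_{A_i^c}x_0$, or more simply $G_i=\{\tilde I_{A_i}z: z\in G\}$ viewed in the RN module $\tilde I_{A_i}E$, whose $L^p$ space is a closed subspace of $L^p(E)$ and hence uniformly convex. Then $G_{i}$ is bounded (norm $\le M_i$), convex (by $L^0$-convexity, taking constant coefficients), and closed in $L^{p}(E)$, because $L^{p}$-convergence implies convergence in probability and $G$ is $\mathcal T_{\varepsilon,\lambda}$-closed. Define $f_{i}(\tilde I_{A_i}z)=\tilde I_{A_i}f(z)$, which is well defined by stability. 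Integrating the pointwise inequality gives $\|f_{i}^{m}u-f_{i}^{m}v\|_{p}\le k^{(i)}_{m}\|u-v\|_{p}$, so $f_i$ is asymptotically nonexpansive.

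Step 2 (transfer of hypotheses). Set $u_{n}=\tilde I_{A_i}x_{n}$. First, $(I-f_i)u_n=\tilde I_{A_i}(x_n-fx_n)$ tends to $0$ in probability and is bounded by $2M_i$, so by dominated convergence it tends to $0$ in $L^{p}(E)$. Second, we show $u_n\to \tilde I_{A_i}x$ weakly in $L^{p}(E)$. By Proposition \ref{proposition2.6}, each functional is $T_F$ with $F\in L^{q}(E^*)\subset E^{*}$. Then $F(u_n)\to F(\tilde I_{A_i}x)$ in probability by $\sigma(E,E^*)$-convergence, and $|F(u_n)|\le \|F\|^{*}M_i$ with $\|F\|^*\in L^{q}\subset L^{1}$. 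Dominated convergence in probability therefore gives $\int F(u_n)\,dP\to\int F(\tilde I_{A_i}x)\,dP$. We also need $\tilde I_{A_i}x\in G_i$, i.e. $x\in G$. This follows because $G_i$ is closed and convex in $L^p(E)$, hence weakly closed, so $\tilde I_{A_i}x$ lies in $G_i$ for every $i$. Gluing with stability and closedness of $G$, we get $x=\sum_i\tilde I_{A_i}z_i\in G$.

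Step 3 (conclusion). Xu's principle gives $f_i(\tilde I_{A_i}x)=\tilde I_{A_i}x$, i.e. $\tilde I_{A_i}(x-fx)=\theta$ for all $i$. Summing over the partition yields $(I-f)x=\theta$. The main obstacle is Step 1: choosing the partition so that the $\eta_m$ are uniformly bounded for every $m$ on each $A_i$ while still having $k^{(i)}_m\to1$, and verifying that the localized maps are genuinely well defined, which rests on the stability argument above. If uniform control of all $m$ on a single piece fails, we can alternatively take $A_i$ with $\sup_m\eta_m\le K_i$ on $A_i$. Since $\sup_m\eta_m<\infty$ $\mathbf{a.s.}$ (a convergent sequence of finite random variables), these sets exhaust $\Omega$, and Egoroff gives the uniform convergence after a further refinement.
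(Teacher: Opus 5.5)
Your strategy is the paper's own. The partition is built from Egoroff sets and level sets of $\xi$ and of $\bigvee_{m\le m_k}\eta_m$, as in Lemma~\ref{lemma2.5}. The $\sigma(E,E^{*})$-convergence is transferred to weak convergence in $L^{p}(E)$ via Proposition~\ref{proposition2.6} and dominated convergence, as in Lemma~\ref{lemma2.6}. Then come dominated convergence for $(I-f)x_n$, Xu's theorem on each piece, and gluing. Steps 1 and 3 are fine.

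There is, however, a gap in Step 2, where you derive $x\in G$. You assert that $u_n=\tilde{I}_{A_i}x_n$ converges weakly in $L^{p}(E)$ \emph{to} $\tilde{I}_{A_i}x$, and then use weak closedness of $G_i$. Both steps presuppose $\tilde{I}_{A_i}x\in L^{p}(E)$. Nothing you have proved controls $\|x\|$, since $x$ is known only to be a $\sigma(E,E^{*})$-limit. What your dominated-convergence computation actually gives is that $\lim_n T_F(u_n)=\int_\Omega F(\tilde{I}_{A_i}x)\,dP$ exists for every $F\in L^{q}(E^{*})$, i.e.\ that $\{u_n\}$ is weakly Cauchy. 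Identifying a weak limit in $L^{p}(E)$ with $\tilde{I}_{A_i}x$, and hence concluding $\tilde{I}_{A_i}x\in G_i$ and $x\in G$, is exactly what needs proof. As written, the argument is circular at this point.

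The missing ingredient is a duality fact about $E$ itself, and there are three ways to supply it.

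\begin{enumerate}[(a)]
\item The paper gets it at the very start from the random Mazur theorem \cite[Corollary 3.4]{GXX09}: the closed $L^{0}$-convex set $G$ is $\sigma(E,E^{*})$-closed. Hence $x\in G$, so $\|\tilde{I}_{A_i}x\|\le M_i$, and Lemma~\ref{lemma2.6} then applies verbatim.
\item To keep your classical-Mazur argument, first bound $\|x\|$ using the random Hahn--Banach theorem. Take $g\in E^{*}$ with $\|g\|^{*}\le 1$ and $g(x)=\|x\|$. Then $|g(x_n)|\le\|x_n\|\le\xi$ and $g(x_n)\to\|x\|$ in probability, so $\|x\|\le\xi$ a.s. Now $\tilde{I}_{A_i}x\in L^{p}(E)$, your weak-convergence computation is legitimate, weak closedness of $G_i$ gives $\tilde{I}_{A_i}x\in G_i$, and gluing gives $x\in G$. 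This replaces the citation of random Mazur with random Hahn--Banach plus classical Mazur in $L^{p}(E)$.
\item Alternatively, use reflexivity of $L^{p}(E)$ (Milman--Pettis) to extract a subsequence of $\{u_n\}$ converging weakly to some $u\in G_i$. Then identify $u=\tilde{I}_{A_i}x$ by localizing and truncating the functionals $F$ and using that $E^{*}$ separates the points of $E$.
\end{enumerate}
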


%-----------------------------------------------------

\section{Proof of the Theorem 2.4}\label{section3}

%-------------------------------------------------------------------------------------------------------------

To prove Theorem 2.10, let us first recall from \cite{Guo10a} the notions of stability of sets and mappings as follows. Let $(E, \|\cdot\|)$ be an $RN$ module over $K$ with base $(\Omega,\mathcal{F},P)$ and $G$ be a nonempty subset of $E$. $G$ is said to be finitely stable if $\tilde{I}_{A}x+ \tilde{I}_{A^{c}}y\in G$ for any $x,y\in G$ and any $A\in \mathcal{F}$, where $A^{c}=\Omega \setminus A$. $G$ is said to be $\sigma$-stable (or, to have the countable concatenation property in the original terminology of \cite{Guo10a}) if for any sequence $\{x_{n},n\in \mathbb{N}\}$ in $G$ and  any countable partition $\{A_{n},n\in \mathbb{N}\}$ of $\Omega$ to $\mathcal{F}$ (namely, each $A_{n}\in \mathcal{F}$, $A_{i}\bigcap A_{j}=\emptyset$ for any $i\neq j$, and $\bigcup^{\infty}_{n=1}A_{n}=\Omega$) there exists $x\in G$ ($x$ is unique and can be written as $\sum^{\infty}_{n=1}\tilde{I}_{A_{n}}x_{n}$) such that $\tilde{I}_{A_{n}}x=\tilde{I}_{A_{n}}x_{n}$ for each $n\in \mathbb{N}$. Further, a mapping $f:G\rightarrow G$ is said to be $\sigma$-stable, if $G$ is $\sigma$-stable and
	$$f(\sum^{\infty}_{n=1}\tilde{I}_{A_{n}}x_{n})=\sum^{\infty}_{n=1}\tilde{I}_{A_{n}}f(x_{n})$$
for every sequence $\{x_{n},n\in \mathbb{N}\}$ in $G$ and every countable partition $\{A_{n},n\in \mathbb{N}\}$ of $\Omega$ to $\mathcal{F}$.

%-------------------------------------------------------------------------------------------

 According to \cite[Lemma 2.11]{GZWG20}, if $G$ is a $\sigma$-stable subset of an $RN$ module and $f:G \to G$ is an asymptotically nonexpansive mapping, then $f$ is necessarily $\sigma$-stable.

\begin{lemma}\label{lemma2.5}
Let $(E,\|\cdot\|)$ be a complete $RN$ module over $K$ 
with base $(\Omega,\mathcal{F},P)$, $G$ a nonempty 
$\mathbf{a.s.}$ bounded closed $L^{0}$-convex subset of 
$E$ with $\theta\in G$, $f: G \rightarrow G$ a random 
asymptotically nonexpansive mapping and $p$ a given 
positive number with $1<p<+\infty$.  Then there exists 
a countable partition $\{A_{i}, i\in\mathbb{N}\}$ of 
$\Omega$ to $\mathcal{F}$  such that for each  $i\in 
\mathbb{N}$:
\begin{enumerate}[{\rm(1)}]
\item $\tilde{I}_{A_{i}}G$ is a nonempty bounded $\|\cdot\|_{p}$-closed convex subset of the Banach space $L^{p}(E)$ such that $\bigvee\{\|x\|~|~x\in \tilde{I}_{A_{i}}G\}\leq\ n_{i}$ for some $n_{i}\in \mathbb{N}$;
\item $f_{i}:\tilde{I}_{A_{i}}G\rightarrow \tilde{I}_{A_{i}}G$ defined by
$$f_{i}(\tilde{I}_{A_{i}}x)=\tilde{I}_{A_{i}}f(x), \forall x\in G$$
is an asymptotically nonexpansive mapping.
\end{enumerate}
\end{lemma}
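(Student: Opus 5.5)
We need a partition on which both the bound $\xi:=\bigvee\{\|x\|~|~x\in G\}$ and the Lipschitz constants $\eta_m$ are controlled by constants. First choose a representative $\xi^{0}$ of $\xi$ (finite a.s.). Since $\{\eta_{m}\}$ converges a.s. to $1$, the random variable $\zeta:=\bigvee_{m\in\mathbb{N}}\eta_{m}$ is finite a.s., so it also lies in $L^{0}_{+}(\mathcal{F})$. Let $B_{k}=\{\omega~|~k-1\le \max(\xi^{0},\zeta^{0})(\omega)<k\}$ for $k\in\mathbb{N}$; discarding empty or null pieces (and absorbing a null set into one piece) gives a countable partition $\{A_{i}\}$ with $\xi\le n_i$ and $\eta_m\le n_i$ on $A_i$ for all $m$. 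Further, by Egoroff's theorem I would refine the partition so that on each $A_i$ the convergence $\eta_m\to1$ is uniform: for each $j$ choose $D_j$ with $P(\Omega\setminus D_j)<1/j$ on which $\eta_m\to 1$ uniformly. Then intersect the sets $D_1, D_2\setminus D_1,\dots$ and the remaining null set with the $B_k$. This is the key refinement, since asymptotic nonexpansiveness in $L^{p}(E)$ needs constants $k_m:=\sup_{A_i}\eta_m^{0}\to 1$.

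For (1): $\tilde I_{A_i}G$ is nonempty since $\theta\in G$. Because $\theta\in G$ and $G$ is $L^0$-convex, we have $\tilde I_{A_i}x=\tilde I_{A_i}x+\tilde I_{A_i^c}\theta\in G$, so $\tilde I_{A_i}G\subset G$. Convexity follows from $L^{0}$-convexity of $G$ with constant coefficients. Boundedness follows from $\|\tilde I_{A_i}x\|\le n_i$, whence $\|\tilde I_{A_i}x\|_p\le n_i$, so $\tilde I_{A_i}G\subset L^p(E)$. For $\|\cdot\|_p$-closedness, take $y_n=\tilde I_{A_i}x_n\to y$ in $\|\cdot\|_p$. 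Then $y_n\to y$ in probability, i.e. in $\mathcal{T}_{\varepsilon,\lambda}$. Since the $y_n$ lie in $G$ and $G$ is $\mathcal{T}_{\varepsilon,\lambda}$-closed, we get $y\in G$. Moreover $\tilde I_{A_i}y=\lim \tilde I_{A_i}y_n=y$, so $y\in\tilde I_{A_i}G$.

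For (2): well-definedness needs that $\tilde I_{A_i}x=\tilde I_{A_i}x'$ implies $\tilde I_{A_i}f(x)=\tilde I_{A_i}f(x')$. This follows from $\|f x-fx'\|\le\eta_1\|x-x'\|$, which vanishes on $A_i$; the $\sigma$-stability from \cite[Lemma 2.11]{GZWG20} gives the same conclusion. Next, $f_i$ maps into $\tilde I_{A_i}G$ because $f(x)\in G$. By induction, $f_i^{m}(\tilde I_{A_i}x)=\tilde I_{A_i}f^{m}(x)$, since $f_i^m(\tilde I_{A_i}x)=f_i(\tilde I_{A_i}f^{m-1}x)=\tilde I_{A_i}f^m x$. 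Hence
\[
\|f_i^m(\tilde I_{A_i}x)-f_i^m(\tilde I_{A_i}y)\|=\tilde I_{A_i}\|f^mx-f^my\|\le \tilde I_{A_i}\eta_m\|x-y\|\le k_m\|\tilde I_{A_i}x-\tilde I_{A_i}y\|,
\]
and integrating the $p$-th power gives $\|\cdot\|_p$-Lipschitz constant $k_m\to1$.

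The main obstacle is exactly the uniformity of $\eta_m\to1$ on each piece, which is what makes the Egoroff refinement necessary. Secondary care is needed with measurability and null sets when forming the partition, and for the closedness argument, where one uses that $\|\cdot\|_p$-convergence implies convergence in probability and that $G$ is closed in $\mathcal{T}_{\varepsilon,\lambda}$.
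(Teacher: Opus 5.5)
Your proof is correct and follows essentially the same route as the paper. Both arguments build a partition from an Egoroff decomposition together with level sets of $\xi$ and of the $\eta_m$, then check boundedness, closedness, convexity and the Lipschitz estimate with constants $\|\tilde I_{A_i}\eta_m\|_\infty\to 1$. Your small streamlinings are all valid: you bound every $\eta_m$ at once via $\bigvee_m\eta_m$ instead of the paper's two-step $m_k$, $H_{k,j}$ construction; you get well-definedness directly from the $\eta_1$-inequality rather than from $\sigma$-stability; and you explicitly justify $\tilde I_{A_i}G\subseteq G$ and obtain $\|\cdot\|_p$-closedness via convergence in probability.
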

%----------------------------------------------------------------------------------------------------------------------------
\begin{proof}
Since $f: G \rightarrow G$ is a random asymptotically 
nonexpansive mapping,  there exists a sequence 
$\{\eta_{m}, m\in \mathbb{N}\}$ in 
$L^{0}_{+}(\mathcal{F})$ with $\{\eta_{m}, m\in 
\mathbb{N}\}$ convergent $\textbf{a.s.}$ to 1, such that
$$\|f^{m}x-f^{m}y\parallel \leq \eta_{m}\| x-y\|,~ 
\forall x,y \in G,~m\in \mathbb{N}.$$
For each $k\in \mathbb{N}$, by  Egoroff's theorem there 
exists $E_{k}\in \mathcal{F}$ with $P(\Omega \setminus 
E_{k})<\frac{1}{k}$ such that $\{\eta_{m}, m\in 
\mathbb{N}\}$ converges uniformly to 1 on $E_{k}$.  It 
is clear that $P(\bigcup^{\infty}_{k=1}E_{k})=1$. We 
can, without loss of generality, assume that 
$\Omega=\bigcup^{\infty}_{k=1}E_{k}$. Moreover, let 
$\Omega_{1}=E_{1}$ and $$\Omega_{k}=E_{k}\setminus 
\bigcup_{i=1}^{k-1}E_{i-1}~\text{for~any}~k\in 
\mathbb{N}~\text{with}~k\geq2,$$
then $\{\Omega_{k},k\in\mathbb{N}$\} is a countable 
partition of $\Omega$ to $\mathcal{F}$ and $\{\eta_{m}, 
m\in \mathbb{N}\}$ converges uniformly to 1 on each 
$\Omega_{k}$.

%--------------------------------------------------------------------------------------------------------------------------

\par
For each $\Omega_{k}$, since $\{\eta_{m}, m\in \mathbb{N}\}$ converges uniformly to 1 on $\Omega_{k}$, there exists $m_{k}\in \mathbb{N}$ such that $$\tilde{I}_{\Omega_{k}}\eta_{m}\in L^{\infty}(\mathcal{F}),~\forall m\geq m_{k}.$$
Further, let $$H_{k,j}=\{\omega\in \Omega_{k}|j-1\leq  (\bigvee_{m=1}^{m_{k}}\eta_{m})^{0}(\omega)<j\}$$
for  any $j\in \mathbb{N}$, where $(\bigvee_{m=1}^{m_{k}}\eta_{m})^{0}$ is an arbitrarily chosen representative of $\bigvee_{m=1}^{m_{k}}\eta_{m}$.
Then $\{H_{k,j}, j\in \mathbb{N}\}$ is a countable partition of $\Omega_{k}$ to $\mathcal{F}$ such that  $$\tilde{I}_{H_{k,j}}\eta_{m}\in L^{\infty}(\mathcal{F}),~\forall m\in \mathbb{N}.$$

\par
Since $G$ is an $\textbf{a.s}$ bounded subset of $E$, there exists $\xi\in L^{0}_{+}(\mathcal{F})$ such that $\|g\|\leq \xi$ for all $g\in G$. Let
$$C_{n}=\{\omega\in\Omega|n-1\leq\xi^{0}(\omega)<n\}$$
for any $n\in \mathbb{N}$, where $\xi^{0}$ is an arbitrarily chosen representative of $\xi$. Then $\{C_{n}, n\in \mathbb{N}\}$ is a countable partition of $\Omega$ to $\mathcal{F}$.

%-------------------------------------------------------------------------------------------------------------

\par
It is clear that $\{C_{n}\cap H_{k,j}, n, k,j\in \mathbb{N}\}$ is a countable partition of $\Omega$ to $\mathcal{F}$, and we claim that it is the desired countable partition.

\par
(1) First, for any $x\in \tilde{I}_{C_{n}\cap H_{k,j}}G\subseteq G$, since $\|x\|\leq \xi$, we have
$$\|x\|=\|\tilde{I}_{C_{n}\cap H_{k,j}}x\|\leq \tilde{I}_{C_{n}\cap H_{k,j}}\xi\leq n,$$
implying
$$\|x\|_{p}=(\int_{\Omega}\|x\|^{p}dP)^{\frac{1}{p}}\leq (\int_{\Omega}n^{p}dP)^{\frac{1}{p}}=n.$$
Then  $\tilde{I}_{C_{n}\cap H_{k,j}}G$ is a bounded subset of $(L^{p}(E),\|\cdot\|_{p})$.

%--------------------------------------------------------------------------------------------------------------------------

\par
Second, since $\tilde{I}_{C_{n}\cap H_{k,j}} G$ is a closed subset of $E$, by the Lebesgue dominance convergence theorem $\tilde{I}_{C_{n}\cap H_{k,j}} G$ is a $\|\cdot\|_{p}$-closed subset of $(L^{p}(E),\|\cdot\|_{p})$.

\par
Third, it is easy to check that $\tilde{I}_{C_{n}\cap H_{k,j}} G$ is $L^{0}$-convex and hence naturally convex.

\par
(2) Since $f$ is $\sigma$-stable, then it is easy to check that the mapping $f_{n,j,k}$ defined by
$$f_{n,j,k}(\tilde{I}_{C_{n}\cap H_{k,j}}x)=\tilde{I}_{C_{n}\cap H_{k,j}}f(x), \forall x\in G$$
is well defined and maps $\tilde{I}_{C_{n}\cap H_{k,j}}G$ into $\tilde{I}_{C_{n}\cap H_{k,j}}G$.

\par
Let $\beta_{m}(n,j,k)=\|\tilde{I}_{C_{n}\cap H_{k,j}}\eta_{m}\|_{\infty}$ for any $m\in \mathbb{N}$, then $\lim_{m\rightarrow\infty}\beta_{m}(n,j,k)=1$ for any fixed $n, j$ and $k$, since $\{\eta_{m}, m \in \mathbb{N}\}$ converges uniformly to 1 on each $\Omega_{k}$. Furthermore, since
\begin{align*}
	f^{m}_{n,j,k}(\tilde{I}_{C_{n}\cap H_{k,j}}x)&=f^{m-1}_{n,j,k}(f_{n,j,k}(\tilde{I}_{C_{n}\cap H_{k,j}}x))\\
	&=f^{m-1}_{n,j,k}(\tilde{I}_{C_{n}\cap H_{k,j}}f(x))\\
    &=f^{m-2}_{n,j,k}(f_{n,j,k}(\tilde{I}_{C_{n}\cap H_{k,j}}f(x)))\\
    &=f^{m-2}_{n,j,k}(\tilde{I}_{C_{n}\cap H_{k,j}}(f^{2}(x)))\\
	&\cdots\\
	&=\tilde{I}_{C_{n}\cap H_{k,j}}f^{m}(x)
\end{align*}
for any $m\in \mathbb{N}$ and any $x\in G$, where $f^{m}_{n,j,k}$ and $f^{m}$ denote the $m$-th iteration  of $f_{n,j,k}$ and $f$, respectively, we have
\begin{align*}
	&\| f^{m}_{n,j,k}(\tilde{I}_{C_{n}\cap H_{k,j}}x)-f^{m}_{n,j,k}(\tilde{I}_{C_{n}\cap H_{k,j}}y)\|_{p}\\
    &=(\int_{\Omega}\| f^{m}_{n,j,k}(\tilde{I}_{C_{n}\cap H_{k,j}}x)-f^{m}_{n,j,k}(\tilde{I}_{C_{n}\cap H_{k,j}}y)\|^{p}dP)^{\frac{1}{p}}\\
	&=(\int_{\Omega}\tilde{I}_{C_{n}\cap H_{k,j}}\|f^{m}(x)-f^{m}(y)\|^{p}dP)^{\frac{1}{p}}\\
	&\leq(\int_{\Omega}\tilde{I}_{C_{n}\cap H_{k,j}}|\eta_{m}|^{p}\|x-y\|^{p}dP)^{\frac{1}{p}}\\
    &=(\int_{\Omega}\tilde{I}_{C_{n}\cap H_{k,j}}|\eta_{m}|^{p}\|\tilde{I}_{C_{n}\cap H_{k,j}}x-\tilde{I}_{C_{n}\cap H_{k,j}}y\|^{p}dP)^{\frac{1}{p}}\\
	&\leq\beta_{m}(n,j,k)\|\tilde{I}_{C_{n}\cap H_{k,j}}x-\tilde{I}_{C_{n}\cap H_{k,j}}y\|_{p}
\end{align*}
for any $x,y\in G$ and any $m \in \mathbb{N}$. Thus $f_{n,j,k}$ is an asymptotically nonexpansive mapping.
\end{proof}

%-------------------------------------------------------------------------------------------------------------------
\begin{remark}
For any $x\in G$,
$$f(x)=\sum^{\infty}_{i=1}\tilde{I}_{A_{i}}f(x)=\sum^{\infty}_{i=1}\tilde{I}_{A_{i}}(\tilde{I}_{A_{i}}f(x))=\sum^{\infty}_{i=1}\tilde{I}_{A_{i}}f_{i}(\tilde{I}_{A_{i}}x).$$
So, (2) of Lemma \ref{lemma2.5} implies that the random asymptotically nonexpansive self-mapping $f$ defined on an  $\mathbf{a.s.}$ bounded closed $L^{0}$-convex subset $G$ of a complete $RN$ module $E$ can be represented by countably concatenating a sequence $\{f_{i}, i\in\mathbb{N}\}$ of asymptotically nonexpansive self-mappings defined on the corresponding bounded $\|\cdot\|_{p}$-closed convex subset $\tilde{I}_{A_{i}}G$ of the Banach space $L^{p}(E)$, where $p$ is a given positive number with $1<p<+\infty$.
\end{remark}

%------------------------------------------------------------------------------------------------------------------

\begin{lemma}\label{lemma2.6}
Let $(E,\|\cdot\|)$ be an $RN$ module over $K$ with base 
$(\Omega,\mathcal{F},P)$, $p$ a given positive number 
with $1<p<+\infty$ and $G$ be an $\mathbf{a.s.}$ bounded 
subset of $L^{p}(E)$ such that $\bigvee\{\|x\|~|~ x\in 
G\}\leq k$ for some $k\in\mathbb{N}$. If $\{x_{n}, 
n\in\mathbb{N}\}$  is a sequence in $G$ that converges 
in $\sigma(E,E^{*})$ to $x\in G$, then $\{x_{n}, 
n\in\mathbb{N}\}$ converges in  $\sigma(L^{p}(E), 
(L^{p}(E))')$ to $x$.
\end{lemma}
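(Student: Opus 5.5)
The plan is to test weak convergence in $L^{p}(E)$ against an arbitrary element of $(L^{p}(E))'$ and to reduce it to convergence in probability via Proposition \ref{proposition2.6}. Let $q$ be the H\"{o}lder conjugate of $p$, so $1<q<+\infty$. Fix $\phi\in(L^{p}(E))'$. By Proposition \ref{proposition2.6} there is $F\in L^{q}(E^{*})$ with $\phi=T_{F}$, that is,
$$\phi(y)=\int_{\Omega}F(y)\,dP \quad\text{for all } y\in L^{p}(E).$$
It therefore suffices to show that $\int_{\Omega}F(x_{n})\,dP\to\int_{\Omega}F(x)\,dP$. Since $F\in L^{q}(E^{*})\subseteq E^{*}$, the hypothesis that $x_{n}\to x$ in $\sigma(E,E^{*})$ gives, by the characterization of random weak convergence recalled in Section \ref{section2}, that $F(x_{n})\to F(x)$ in probability $P$.

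The next step upgrades convergence in probability to convergence of the integrals by uniform integrability. Because $x_{n}$ and $x$ lie in $G$, we have $\|x_{n}\|\leq k$ and $\|x\|\leq k$. Hence
$$|F(x_{n})-F(x)|\leq \|F\|^{*}\,\|x_{n}-x\|\leq 2k\,\|F\|^{*}.$$
Here $\|F\|^{*}\in L^{q}$ with $q\geq 1$, so $\|F\|^{*}\in L^{1}(\Omega,\mathcal{F},P)$ because $P$ is a probability measure. The sequence $\{F(x_{n})-F(x)\}$ is thus dominated by an integrable function and tends to $0$ in probability. The dominated convergence theorem, in its convergence-in-probability form (or by passing to a.s. convergent subsequences of any subsequence), then gives $\int_{\Omega}|F(x_{n})-F(x)|\,dP\to 0$. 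Consequently $\phi(x_{n})\to\phi(x)$.

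As $\phi$ was arbitrary, $x_{n}\to x$ in $\sigma(L^{p}(E),(L^{p}(E))')$. The only delicate point is the domination step. This is exactly where the uniform bound $\bigvee\{\|x\|~|~x\in G\}\leq k$ is needed: a merely $L^{p}$-bounded sequence would only give uniform integrability through Hölder's inequality, which would require more care. The bound is also used implicitly to know that $G\subseteq L^{p}(E)$, so that both $\phi(x_{n})$ and $T_{F}(x_{n})=\int_{\Omega}F(x_{n})\,dP$ are defined. One further check is that the bound $|F(y)|\leq\|F\|^{*}\|y\|$ holds, which follows from the definition of $\|\cdot\|^{*}$ as an infimum that is attained (standard, from \cite{Guo92}).
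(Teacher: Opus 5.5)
Your proposal is correct and follows the paper's proof essentially step for step. You represent an arbitrary functional in $(L^{p}(E))'$ as $T_{F}$ with $F\in L^{q}(E^{*})$ via Proposition \ref{proposition2.6}, use $F(x_{n})\to F(x)$ in probability, and apply dominated convergence with the integrable dominant $2k\|F\|^{*}$. Your remark that dominated convergence is being used in its convergence-in-probability form (for instance via a.s.\ convergent sub-subsequences) makes precise a step the paper leaves implicit.
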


%------------------------------------------------------------------------------------------

\begin{proof}
For any $H\in (L^{p}(E))'$, by Prosition \ref{proposition2.6} there exists $F\in L^{q}(E^{*})$ such that $T(F)=H$.
Since $\{x_{n}, n\in\mathbb{N}\}$ converges in $\sigma(E,E^{*})$ to $x\in G$, then $|F(x_{n})-F(x)|$ converges  to 0.
Further, since $F\in L^{q}(E^{*})$ and
$$|F(x_{n})-F(x)|\leq\|F\|^{*}\|x_{n}-x|\|\leq2k\|F\|^{*},$$
by the Lebesgue dominance convergence theorem we have
\begin{align*}
\lim_{n\rightarrow \infty}|H(x_{n})-H(x)|&=\lim_{n\rightarrow \infty}|T(F)(x_{n})-T(F)(x)|\\
&\leq\lim_{n\rightarrow \infty}\int_{\Omega}|F(x_{n})-F(x)|dP\\
&=0.
\end{align*}
Therefore, $\{x_{n},n\in\mathbb{N}\}$ converges in  $\sigma(L^{p}(E), (L^{p}(E))')$ to $x$.
\end{proof}

With the above preparations, we are now ready to prove Theorem \ref{theorem1.7}.

\begin{proof}[\textbf{Proof of Theorem \ref{theorem1.7}}]
	We can, without loss of generality, assume that $\theta \in G$ (otherwise, take an arbitrary  $u_{0}\in G$, and replace $G$ and $f$ with  $G^{'}=G-u_{0}$ and $f^{'}:G^{'}\rightarrow G^{'}$ defined by
$f^{'}(u)=f(u+u_{0})-u_{0},~\forall u\in G^{'}$).

\par
Assume that $\{x_{n}, n\in \mathbb{N}\}$ is a sequence in $G$ converging in $\sigma(E,E^{*})$ to $x\in E$ and $(I-f)x_{n}$ converges to $\theta$. First, we have $x\in G$ by random Mazur theorem \cite[Corollary 3.4]{GXX09}. Let $\{A_{i}, i\in \mathbb{N}\}$ and $\{f_{i}, i\in \mathbb{N}\}$ be the countable partition and mappings obtained as in Lemma \ref{lemma2.5}, respectively.

\par
For each $i\in N$, since $E$ is a topological module  over the topological algebra $L^{0}(\mathcal{F}, \mathbb{K})$, it is easy to check that $\{\tilde{I}_{A_{i}}x_{n}, n\in \mathbb{N}\}$ converges in $\sigma(E,E^{*})$ to $\tilde{I}_{A_{i}}x$. For each $x_{n}$, since
$$(I-f_{i})(\tilde{I}_{A_{i}}x_{n})=\tilde{I}_{A_{i}}x_{n}-f_{i}(\tilde{I}_{A_{i}}x_{n})=\tilde{I}_{A_{i}}x_{n}-\tilde{I}_{A_{i}}f(x_{n})=\tilde{I}_{A_{i}}(I-f)(x_{n}),$$
then $\{(I-f_{i})(\tilde{I}_{A_{i}}x_{n}), n\in \mathbb{N}\}$ converges to $\theta$, and hence converges in $\|\cdot\|_{p}$-topology to $\theta$ by the Lebesgue dominance convergence theorem.

\par
To sum up, for each $i\in \mathbb{N}$, $f_{i}$ is an asymptotically nonexpansive self-mapping defined on a nonempty bounded $\|\cdot\|_{p}$-closed convex subset $\tilde{I}_{A_{i}}G$ of the uniformly convex Banach space $L^{p}(E)$, $\{\tilde{I}_{A_{i}}x_{n}, n\in \mathbb{N}\}$ is a sequence in $\tilde{I}_{A_{i}}G$ converging in $\sigma(L^{p}(E), (L^{p}(E))')$ to $\tilde{I}_{A_{i}}x$, and such that $\{(I-f)(\tilde{I}_{A_{i}}x_{n}), n\in \mathbb{N}\}$ converges in $\|\cdot\|_{p}$-topology to $\theta$. By Xu's classical demiclosedness principle \cite[Theorem2]{X91}, $f_{i}(\tilde{I}_{A_{i}}x)=\tilde{I}_{A_{i}}x$.

\par
Then, we have
$$f(x)=\sum^{\infty}_{i=1}\tilde{I}_{A_{i}}f_{i}(\tilde{I}_{A_{i}}x)=\sum^{\infty}_{i=1}\tilde{I}_{A_{i}}x=x.$$
Thus, $(I-f)$ is random demiclosed at $\theta$.
\end{proof}

%--------------------------------------------------------------------------------------------------------------------------

\begin{remark}\label{remark2.7}
	When $(\Omega, \mathcal{F}, P)$ is trivial, namely $\mathcal{F}=\{\Omega, \emptyset\}$,  the complete random uniformly convex $RN$ module $(E,\|\cdot\|)$ reduces to a uniformly convex Banach space,  $G$ to a bounded closed convex subset of $E$ and $f$ to an asymptotically nonexpansive mapping, and then Theorem \ref{theorem1.7} becomes \cite[Theorem 2]{X91}, namely, Xu's  classical demiclosedness principle for an asymptotically nonexpansive mapping.
\end{remark}

\end{document}